\theoremstyle{plain}
\newtheorem{theorem}{Theorem}[section]
\theoremstyle{definition}
\theoremstyle{remark}
\newtheorem{remark}[theorem]{Remark}
\numberwithin{equation}{section}
\numberwithin{figure}{section}
\numberwithin{table}{section}
\newcommand{\dd}{\mathop{}\!\mathrm{d}}
\begin{document}

\title{Entropy maximizers for kinetic wave equations set on tori}

\author{Miguel Escobedo} 
\address{Departamento de Matemáticas, Universidad del País Vasco, Apartado 644, E–48080 Bilbao, Spain.}
\email{miguel.escobedo@ehu.es}

\author{Pierre Germain} 
\address{Department of Mathematics, Huxley building, South Kensington campus, Imperial College London, London SW7 2AZ, United Kingdom}
\email{pgermain@ic.ac.uk}

\author{Joonhyun La} 
\address{KIAS School of Mathematics, 85 Hoegi-ro, Dongdaemun-gu, Seoul 02455, Republic of Korea.}
\email{joonhyun@kias.re.kr}

\author{Angeliki Menegaki} 
\address{Department of Mathematics, Huxley building, South Kensington campus, Imperial College London, London SW7 2AZ, United Kingdom}
\email{a.menegaki@imperial.ac.uk}

\maketitle

\begin{abstract}
We consider the kinetic wave equation, or phonon Boltzmann equation, set on the torus (physical system set on the lattice). We describe entropy maximizers for fixed mass and energy; our framework is very general, being valid in any dimension, for any dispersion relation, and even including the quantum kinetic wave equation. Of particular interest is the presence of condensation in certain regimes which we characterize.
\end{abstract}

\section{Introduction}

\subsection{Wave turbulence on lattices}
The central idea in wave turbulence theory is that nonlinear wave equations are amenable to a kinetic description in a regime characterized by weak nonlinearity and turbulence. The application of this idea to Hamiltonian equations set on lattices $\mathbb{Z}^d$ has recently enjoyed renewed interest  to approach questions such as energy transport or heat conduction in anharmonic crystals through oscillator chain models see \cite{fiftyFPU,gallavotti2007fermi, Dhar08, Lep16, LLP03}, for the wave turbulence approach in \cite{LukkarinenSpohn2008,AokiLukkSpohn} and a comprehensive review on the wave turbulence approach, see \cite{ONORATO2023} and references therein. 

Following this idea leads to a nonlinear kinetic equation, known as the kinetic wave equation, or phonon Boltzmann equation, on nonnegative functions defined on the torus\footnote{This means that we are focusing here on the space-homogeneous case, which corresponds to statistical invariance by translation at the level of the Hamiltonian equation} 
$$
\mathbb{T}^d = \mathbb{R}^d / (2\pi \mathbb{Z})^d.
$$
We will not focus on the equation itself here, but rather on its conserved and monotone quantities. The conserved quantities are the mass $\mathcal{M}$ and energy $\mathcal{E}$
$$
\mathcal{M}(f) = \int_{\mathbb{T}^d} f(p) \dd p, \qquad \mathcal{E}(f) = \int_{\mathbb{T}^d} \omega(p) f(p) \dd p.
$$
($f$ is always assumed to be integrable and nonnegative).
Here, $\omega$ is a nonnegative function on the torus given by the dispersion relation for the linear part of the Hamiltonian system at hand - we will come back shortly to its physical origin and give some examples.
The monotone quantity is the entropy, whose definition depends on whether the nonlinear wave equation is classical or quantized, see \cite{Spohn2005}. The classical and quantum entropy are given by
$$
 \mathcal{H}_{cl}(f) = \int_{\mathbb{T}^d} \ln f(p) \dd p, \qquad \mathcal{H}_{qu}(f) = \int_{\mathbb{T}^d} \left[(1+f(p))\ln (1+f(p)) - f(p)\ln f(p) \right] \dd p
$$
respectively.

Since the energy and mass are conserved but the entropy increases, it is natural to expect that attractors of the dynamics will be the \textit{maximizers of the entropy for fixed energy and mass}; the aim of this note is to describe them. These maximizers should be the key to the dynamics of the homogeneous equation but also its hydrodynamic limit, as is the case for the Boltzmann equation.

\subsection{The dispersion relation} \label{sectiondispersion} Neglecting anharmonic terms and demanding translation invariance, we are led to a Hamiltonian of the form
$$
H(p,q) = \frac{1}{2} \sum_{n \in \mathbb{Z}^d} p_n^2 + \frac{1}{2} \sum \alpha(m-n) q_m q_n,
$$
where $p$ and $q$ are real functions on the lattice, and $\alpha$ is even and real-valued. This yields the dynamical equation
$$
\frac{d^2}{dt^2} q_n = - \sum \alpha(m -n) q_m(t),
$$
or, after taking the Fourier transform $\widehat{q}(p) = \sum_{n \in \mathbb{Z}^d} q_n e^{in \cdot p}$, where $p \in \mathbb{T}^d$,
$$
\frac{d^2}{dt^2} \widehat{q}(t,p) = \widehat{\alpha} (p) \widehat{q}(t,p) 
$$
Thus, $\alpha(p) \geq 0$ is a necessary stability condition and the dispersion relation is given by
$$
\omega(p) = \sqrt{\widehat{\alpha (p)}}.
$$

We now review some examples of dispersion relation: nearest neighbor interactions, which are standard, but also long range interactions.
\begin{itemize}
\item Nearest neighbor interaction without pinning corresponds to the discrete Laplacian, or in other words
$$
\alpha(n) = \begin{cases} 2d & \mbox{if $n=0$} \\ -1 & \mbox{if $|n|=1$} \\ 0 & \mbox{otherwise} \end{cases}, \qquad \mbox{giving} \qquad \omega(p) = 2 \sqrt{ \sum_{k=1}^d \sin\left(\frac{p_k}{2} \right)^2}
$$
\item Nearest neighbor interaction with pinning is obtained by adding to $\alpha$ in the previous example $\omega_0^2 \delta_{0,n}$ (Kronecker delta). This gives
$$
\omega(p) = \sqrt{\omega_0^2 + 4  \sum_{k=1}^d \sin\left(\frac{p_k}{2} \right)^2}
$$
{\item For next to nearest neighbor interaction, with or without pinning, the dispersion relation is given by
$$
\sqrt{\omega _0^2+2\sum_{k=1}^d \sin\left( \frac{p_k}2 \right)^4 }
$$
with $\omega _0\ge 0$ (\cite{Spohn2005, Luk2016}).}

\item A typical example of long range interaction is provided by  $\alpha(n) = |n|^{-\delta}$. For simplicity, we assume that there is no pinning $\sum \alpha(n) = 0$ and that the decay condition 
$d < \delta < d +2$ holds (it would not be hard to consider more general situations, but would lead to distinguishing more cases). By Poisson summation, one finds that $\omega(p)$ is smooth except at zero where
$$
\omega(p) \sim |p|^{\frac{\delta -d}{2}}.
$$
\end{itemize}

\underline{We will henceforth assume} the dispersion relation $\omega$ to be continuous and taking its maximal and minimal value on a set of measure zero - both assumptions could be easily relaxed, but they allow for cleaner statements and proofs.

Since $\omega \geq 0$ and by the homogeneity of the problem, we will furthermore assume that 
\begin{equation}
\label{a1}
\min_{\mathbb{T}^d} \omega = a \geq 0, \qquad \max_{\mathbb{T}^d} \omega = 1.
\end{equation}
{
It follows from (\ref{a1}) that for any non negative bounded measure $\lambda$ on  $\mathbb{T}^d$, 
\begin{equation}
\label{admissible}
a\mathcal M(\lambda)\le\mathcal E(\lambda)\le \mathcal M(\lambda)
\end{equation}
and this is then a necessary condition on any pair $\mathcal M, \mathcal E$ to be the mass and energy of some non negative, bounded measure $\lambda$.
}

\subsection{Relaxing the maximization problem} Writing the Euler-Lagrange equation for the constrained maximization of the entropy with fixed mass and energy leads to the Rayleigh-Jeans $R_{\mu,\nu}$ and the Bose-Einstein $B_{\mu,\nu}$ equilibria in the classical and quantum cases respectively
\begin{equation}
\label{RJBE}
R_{\mu,\nu}(p) = \frac{1}{\mu \omega(p) + \nu} , \qquad B_{\mu,\nu}(p) =  \frac{1}{e^{\mu \omega(p) + \nu}-1}.
\end{equation}
We have $R_{\mu,\nu},B_{\mu,\nu} \geq 0$ if and only if
\begin{equation}
\label{rangemunu}
\mbox{either} \;\; \begin{cases} \nu \geq 0 \\ \mu \geq - \nu \end{cases} \qquad \mbox{or}  \;\; \begin{cases} \nu \leq 0 \\ \mu \geq - \frac \nu a \end{cases}.
\end{equation}

However, it is shown below that, for certain choices of $\omega $, there are values of $\mathcal M$ and $\mathcal E$, satisfying (\ref{admissible}) for which there is no entropy maximizers of the form (\ref{RJBE}).
 
As it was observed first in \cite{Einstein} for ideal Bose gases, 
the solution to this difficulty is to relax the problem at hand by allowing general non negative measures as possible maximizers (cf. for example  \cite{EscMiscValle} for a detailed description). This leads to maximizers with singular parts, similar to the  Bose-Einstein distributions in presence of a condensate, for the Nordheim equation.

In order to set up our minimization problem rigorously for general measures, we decompose a nonnegative measure $\lambda$ by the Radon-Nikodym theorem into a part which is absolutely continuous with respect to the Lebesgue measure, and a part which is singular:
\begin{equation}
\label{RadonNikodym}
\dd \lambda = f \dd p + \dd \lambda_{\operatorname{sing}}.
\end{equation}
Following \cite{DemengelTemam,EscMiscValle}, the definitions of the mass, energy and entropy can be extended to general measures as follows
\begin{align*}
\begin{cases}
& \displaystyle \mathcal{M}(\lambda) = \int_{\mathbb{T}^d} \dd \lambda \\ 
& \displaystyle \mathcal{E}(\lambda) = \int_{\mathbb{T}^d} \omega \dd \lambda
\end{cases}
\qquad 
\begin{cases}
& \displaystyle  \mathcal{H}_{cl}(\lambda) = \int_{\mathbb{T}^d} \ln f(p) \dd p \\
& \displaystyle \mathcal{H}_{qu}(\lambda) = \int_{\mathbb{T}^d} \left[(1+f(p))\ln (1+f(p)) - f(p)\ln f(p) \right] \dd p
\end{cases}
\end{align*}
The definitions for the mass and energy are natural; as for the entropy of a general measure, we define it to be equal to the absolutely continuous part of the measure. In heuristic terms, the logarithmic growth of the entropy functional cancels the contribution of the singular part of the measure.

\subsection{Main result} 
Note that
\begin{equation}
\label{EMa1}
\mbox{for any $\lambda$,} \qquad a \leq \frac{\mathcal{E}(\lambda)}{\mathcal{M}(\lambda)} \leq 1.
\end{equation}
Defining
\begin{equation}
\label{defalphabeta}
\alpha = a + (2\pi)^d \left( \int_{\mathbb{T}^d}  \frac{\dd p}{\omega - a} \right)^{-1}, \qquad \beta =  1 - (2\pi)^d \left( \int_{\mathbb{T}^d}  \frac{\dd p}{1- \omega} \right)^{-1},
\end{equation}
we see that
$$
a \leq \alpha < \beta \leq 1.
$$
Furthermore, the first and second inequalities are equalities if and only if $\int \frac{\dd p}{\omega - a} = \infty$ and $\int \frac{\dd p}{1- \omega} = \infty$ respectively.

\begin{theorem} [Maximizers of the classical entropy]
\label{maintheorem}
We want to describe the maximizers of the entropy $\mathcal{H}_{cl}$ over positive measures subject to the constraints $\mathcal{M}(f) = \mathcal{M}_0$ and $\mathcal{E}(f) = \mathcal{E}_0$.
\begin{itemize}
\item[(i)] If $\displaystyle \alpha < \frac{\mathcal{E}_0}{\mathcal{M}_0} < \beta$, the unique maximizer is the unique RJ equilibrium with this mass and energy.
\item[(ii)] If $\displaystyle \beta \leq \frac{\mathcal{E}_0}{\mathcal{M}_0} < 1$, maximizers are of the type $R_{\mu,\nu} + \lambda_{\operatorname{sing}}$, where $(\mu,\nu)$ are characterized by 
$$
\begin{cases}
\mathcal{M}(R_{\mu,\nu}) = \frac{\mathcal{M}_0 - \mathcal{E}_0}{1-\beta} \\ \mathcal{E}(R_{\mu,\nu}) = \beta \frac{\mathcal{M}_0 - \mathcal{E}_0}{1-\beta},
\end{cases}
$$
and $\lambda_{\operatorname{sing}}$ has mass $\frac{1}{1-\beta}(\mathcal{E}_0 - \beta \mathcal{M}_0)$ and is supported on the set where $\omega$ is maximal.

\item[(iii)] If $\displaystyle a < \frac{\mathcal{E}_0}{\mathcal{M}_0} \leq \alpha$, maximizers are of the type $R_{\mu,\nu} + \lambda_{\operatorname{sing}}$ with 
$$
\begin{cases}
\mathcal{M}(R_{\mu,\nu}) = \frac{\mathcal{M}_0 - \mathcal{E}_0}{1-\alpha} \\ \mathcal{E}(R_{\mu,\nu}) = \alpha \frac{\mathcal{M}_0 - \mathcal{E}_0}{1-\alpha}.
\end{cases}
$$ Also $\lambda_{\operatorname{sing}}$ has mass and energy equal to $\frac{1}{1-\alpha}(\mathcal{E}_0 - \alpha \mathcal{M}_0)$ and is supported on the set where $\omega$ is maximal.
\end{itemize}
\end{theorem}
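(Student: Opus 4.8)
The plan is to treat this as a concave maximization under two linear constraints and, for each regime, to produce explicit multipliers $(\mu,\nu)$ that turn a pointwise tangent-line estimate into a global a priori bound which the claimed maximizer saturates. The engine is the concavity of the logarithm: for any admissible $(\mu,\nu)$ (meaning $\mu\omega+\nu\ge 0$ on $[a,1]$, cf. \eqref{rangemunu}) and any $f>0$, the tangent line to $\ln$ at $R_{\mu,\nu}=(\mu\omega+\nu)^{-1}$ reads
$$
\ln f \;\le\; -\ln(\mu\omega+\nu)-1+(\mu\omega+\nu)\,f .
$$
Integrating in $p$ and using $\int \omega f\,\dd p=\mathcal E_0-\int\omega\,\dd\lambda_{\mathrm{sing}}$ together with $\int f\,\dd p=\mathcal M_0-\int\dd\lambda_{\mathrm{sing}}$ gives the exact identity
$$
\mathcal H_{cl}(\lambda)\;\le\; G(\mu,\nu)-\int(\mu\omega+\nu)\,\dd\lambda_{\mathrm{sing}},\qquad G(\mu,\nu):=\int_{\mathbb T^d}\bigl[-\ln(\mu\omega+\nu)-1\bigr]\dd p+\mu\mathcal E_0+\nu\mathcal M_0 .
$$
Since $\lambda_{\mathrm{sing}}\ge0$ and $\mu\omega+\nu\ge0$, the last term is $\le 0$, so $\mathcal H_{cl}(\lambda)\le G(\mu,\nu)$ for every competitor of mass $\mathcal M_0$ and energy $\mathcal E_0$. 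Equality holds if and only if (a) $f=R_{\mu,\nu}$ a.e., by strict concavity of $\ln$, and (b) $\lambda_{\mathrm{sing}}$ is carried by the zero set $\{\mu\omega+\nu=0\}$.

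Thus the whole problem reduces to choosing $(\mu,\nu)$ so that a measure $R_{\mu,\nu}+\lambda_{\mathrm{sing}}$, with $\lambda_{\mathrm{sing}}$ supported on $\{\mu\omega+\nu=0\}$, realizes the prescribed $(\mathcal M_0,\mathcal E_0)$; such a measure is then automatically a maximizer, and its absolutely continuous part is unique. To locate $(\mu,\nu)$ I would parametrize by $s=\nu/\mu$ and study the scale-invariant ratio $\phi(s)=\mathcal E(R_{\mu,\nu})/\mathcal M(R_{\mu,\nu})$. Writing $\phi(s)=(2\pi)^d/I(s)-s$ with $I(s)=\int_{\mathbb T^d}(\omega+s)^{-1}\dd p$, a Cauchy--Schwarz computation gives $\phi'(s)\ge 0$, so $\phi$ is monotone on each admissible branch, namely $s>-a$ for $\mu>0$ and $s<-1$ for $\mu<0$. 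Taking the limits of $\phi$ as $s$ approaches the forbidden endpoints $-a$ and $-1$ produces exactly the constants $\alpha$ and $\beta$ of \eqref{defalphabeta}; combined with the freedom to scale the magnitude of $(\mu,\nu)$ (which fixes the total mass at fixed ratio), this shows that the RJ ratios fill precisely the open interval $(\alpha,\beta)$.

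With this dictionary the three cases follow. In (i), $\mathcal E_0/\mathcal M_0\in(\alpha,\beta)$ is attained by a genuine $R_{\mu,\nu}$ with $\mu\omega+\nu>0$ strictly, which forces $\lambda_{\mathrm{sing}}=0$ and yields uniqueness. In (ii) and (iii) the ratio lies outside $(\alpha,\beta)$, so one takes the boundary multiplier: for (ii) one uses $s=-1$, $\mu<0$, whose zero set is $\{\omega=1\}$, and for (iii) one uses $s=-a$, $\mu>0$, whose zero set is $\{\omega=a\}$. In each case I would fix the scale so the absolutely continuous part carries the boundary ratio $\beta$ resp. $\alpha$, and load the remaining mass onto the extremal set; solving the two linear balance equations $\mathcal M(R)+\mathcal M(\lambda_{\mathrm{sing}})=\mathcal M_0$ and $\mathcal E(R)+\mathcal E(\lambda_{\mathrm{sing}})=\mathcal E_0$ for the two masses gives the stated values, and the hypothesis that $\mathcal E_0/\mathcal M_0$ lies on the correct side of the boundary is exactly what keeps the condensate mass nonnegative.

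The main obstacle is the step establishing the range of the multiplier map: one must justify that $\phi$ is continuous and strictly monotone up to the endpoints and compute its boundary limits rigorously, which is where the standing hypotheses on $\omega$ enter — continuity, extrema attained on null sets (so that $\{\omega=a\}$ and $\{\omega=1\}$ can support a condensate while not charging the absolutely continuous integrals), and the convergence/divergence dichotomy for $\int(\omega-a)^{-1}\dd p$ and $\int(1-\omega)^{-1}\dd p$ that decides whether $\alpha>a$, $\beta<1$, and hence whether the condensation regimes are nonempty. A secondary technical point is verifying that $G(\mu,\nu)$ and $\mathcal H_{cl}(R_{\mu,\nu})$ remain finite at the boundary multipliers, which holds because $|\ln(\mu\omega+\nu)|$ is integrable whenever $(\mu\omega+\nu)^{-1}$ is.
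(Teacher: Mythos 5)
Your proof is correct and follows essentially the same route as the paper: the tangent-line bound for $\ln$ at $R_{\mu,\nu}$ (equivalently $\ln x\le x-1$ applied to $f/R_{\mu,\nu}$) yielding the a priori inequality \eqref{inegalitedebase}, the monotone ratio function (your $\phi(s)$ is the paper's $F(\tan\varphi)$, with Cauchy--Schwarz in place of Jensen), and the boundary multipliers $\mu=-\nu$ resp.\ $\nu=-a\mu$ in the two condensation regimes. One remark worth recording: your treatment of case (iii) correctly places the condensate on $\{\omega=a\}$ and balances mass and energy with $\mathcal E(\lambda_{\operatorname{sing}})=a\,\mathcal M(\lambda_{\operatorname{sing}})$, which yields $\mathcal M(R_{\mu,\nu})=(\mathcal E_0-a\mathcal M_0)/(\alpha-a)$ and $\mathcal M(\lambda_{\operatorname{sing}})=(\alpha\mathcal M_0-\mathcal E_0)/(\alpha-a)$; these are \emph{not} the formulas printed in part (iii) of the theorem, which appear to be carried over from case (ii) (as printed they would assign the condensate nonpositive mass and support it where $\omega$ is maximal rather than minimal). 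Since the paper's own proof stops after case (ii), your argument in fact supplies the omitted case in its corrected form.
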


\begin{theorem}[Maximizers of the quantum entropy]
    \label{Quantum_maintheorem}
Let $\mathcal{M}_0$ and $\mathcal{E}_0$ $\in (0,\infty)$ be a given mass and energy. We describe the maximizers of the entropy $\mathcal{H}_{qu}$ under the constraints that $\mathcal{M}(\lambda) = \mathcal{M}_0$ and $\mathcal{E}(\lambda) = \mathcal{E}_0$. Let $f_{\pm}$ be two differentiable, increasing functions satisfying $f_+(0) = f_-(0)=0$,  with the following linear asymptotic behavior as $\mathcal{M} \to \infty$:
$$ 
f_+(\mathcal{M}) \approx \beta \mathcal{M} \text{ and }
f_-(\mathcal{M}) \approx \alpha \mathcal{M}, 
$$
where $\alpha$ and $\beta$ are defined in \eqref{defalphabeta}. 
We split into the following cases
\begin{itemize}
    \item[(i)] If $f_- (\mathcal{M}_0) < \mathcal{E}_0 < f_+ (\mathcal{M}_0) $, the unique maximizer is the unique Bose-Einstein equilibrium distribution with this mass and energy. This corresponds to the interior of region $S$ in Figure \ref{fig:Quantum picture}. 
    \item[(ii)] If $f_+ (\mathcal{M}_0) \leq \mathcal{E}_0 < \mathcal{M}_0$, the maximizers have the form $R_{\mu,\nu} +\lambda_{\operatorname{sing}}$ and $\lambda_{sing}$ are concentrated in the regions where $\omega=1$. 
 This corresponds to the gray region above the cone in Figure \ref{fig:Quantum picture}.
    \item[(iii)] If $ a \mathcal{M}_0 < \mathcal{E}_0 \leq f_- (\mathcal{M}_0)$, an analogous statement as in (ii) holds with the maximizers being $R_{\mu,\nu} +\lambda_{\operatorname{sing}}$ and $\lambda_{sing}$. This corresponds to the gray region below the cone in Figure \ref{fig:Quantum picture}.
\end{itemize}
\end{theorem}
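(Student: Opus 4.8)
The plan is to follow the strategy of Theorem \ref{maintheorem}, replacing the logarithmic integrand by $g(f)=(1+f)\ln(1+f)-f\ln f$ and the Rayleigh--Jeans profile by the Bose--Einstein one. Since $g$ is strictly concave with $g'(f)=\ln\frac{1+f}{f}$, the penalized functional $\mathcal{G}_{\mu,\nu}(\lambda)=\mathcal{H}_{qu}(\lambda)-\mu\mathcal{E}(\lambda)-\nu\mathcal{M}(\lambda)$ decouples pointwise on the absolutely continuous part, and the Euler--Lagrange relation $g'(f)=\mu\omega+\nu$ is solved precisely by $f=B_{\mu,\nu}$ of \eqref{RJBE} wherever $\mu\omega+\nu>0$. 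On the singular part, $\mathcal{G}_{\mu,\nu}$ reduces to $-\int(\mu\omega+\nu)\,\dd\lambda_{\operatorname{sing}}$, so a nonzero, entropy-neutral $\lambda_{\operatorname{sing}}$ can appear only where $\mu\omega+\nu$ vanishes. By \eqref{rangemunu} this occurs exactly on the boundary $\nu=-\mu$, $\mu\le0$ (forcing concentration on $\{\omega=1\}$) or on $\mu=-\nu/a$, $\nu\le0$ (forcing concentration on $\{\omega=a\}$); these two mechanisms are the source of cases (ii) and (iii).

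Second, I would delineate the pure Bose--Einstein region $S$. Parametrizing the two boundaries by $\mu\omega+\nu=s(1-\omega)$ and $\mu\omega+\nu=t(\omega-a)$ respectively, the pairs $(\mathcal{M}(B_{\mu,\nu}),\mathcal{E}(B_{\mu,\nu}))$ sweep out two increasing curves through the origin that bound $S$, namely $\mathcal{E}=f_\pm(\mathcal{M})$. Expanding $e^{s(1-\omega)}-1\approx s(1-\omega)$ and $e^{t(\omega-a)}-1\approx t(\omega-a)$ as the parameters tend to $0$ (the regime $\mathcal{M}\to\infty$) and using the identities $\frac{\omega}{1-\omega}=-1+\frac{1}{1-\omega}$ and $\frac{\omega}{\omega-a}=1+\frac{a}{\omega-a}$ yields the asymptotic slopes
$$
\frac{\mathcal{E}}{\mathcal{M}}\to 1-\frac{(2\pi)^d}{\int_{\mathbb{T}^d}\frac{\dd p}{1-\omega}}=\beta \qquad\text{and}\qquad \frac{\mathcal{E}}{\mathcal{M}}\to a+\frac{(2\pi)^d}{\int_{\mathbb{T}^d}\frac{\dd p}{\omega-a}}=\alpha,
$$
matching \eqref{defalphabeta} and the hypotheses on $f_\pm$. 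For $(\mathcal{M}_0,\mathcal{E}_0)$ in the interior of $S$, I would show that $(\mu,\nu)\mapsto(\mathcal{M}(B_{\mu,\nu}),\mathcal{E}(B_{\mu,\nu}))$ is a bijection onto $\operatorname{int}(S)$: its Jacobian is (minus) the Gram matrix of $\{\omega,1\}$ in $L^2\big(B_{\mu,\nu}(1+B_{\mu,\nu})\,\dd p\big)$, which is positive definite since $\omega$ is nonconstant, giving a local diffeomorphism upgraded to a global bijection by monotonicity up to the boundary curves. This settles case (i), including uniqueness.

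Third, for a pair on or beyond a boundary, say $f_+(\mathcal{M}_0)\le\mathcal{E}_0<\mathcal{M}_0$ (case (ii)), I would seek a maximizer $B_{\mu^\star,\nu^\star}+\lambda_{\operatorname{sing}}$ with $(\mu^\star,\nu^\star)$ on the upper boundary (one parameter $s$) and $\lambda_{\operatorname{sing}}$ supported on $\{\omega=1\}$. Since $\omega=1$ there, $\mathcal{E}(\lambda_{\operatorname{sing}})=\mathcal{M}(\lambda_{\operatorname{sing}})$, so the constraints read $\mathcal{M}(B_s)+\mathcal{M}(\lambda_{\operatorname{sing}})=\mathcal{M}_0$ and $\mathcal{E}(B_s)+\mathcal{M}(\lambda_{\operatorname{sing}})=\mathcal{E}_0$; subtracting fixes $s$ through $\mathcal{M}(B_s)-\mathcal{E}(B_s)=\mathcal{M}_0-\mathcal{E}_0$ and then gives the condensate mass $\mathcal{M}_0-\mathcal{M}(B_s)$. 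Optimality follows from the tangent-line bound $g(f)\le g(B_{\mu^\star,\nu^\star})+(\mu^\star\omega+\nu^\star)(f-B_{\mu^\star,\nu^\star})$: integrating it against an arbitrary competitor $\lambda$ of the prescribed mass and energy and eliminating the constraints collapses the estimate to $\mathcal{H}_{qu}(\lambda)\le\mathcal{H}_{qu}(B_{\mu^\star,\nu^\star})-\int(\mu^\star\omega+\nu^\star)\,\dd\lambda_{\operatorname{sing}}\le\mathcal{H}_{qu}(B_{\mu^\star,\nu^\star})$, the last step using $\mu^\star\omega+\nu^\star\ge0$; the right-hand side is exactly the entropy of the candidate, whose condensate lives on $\{\mu^\star\omega+\nu^\star=0\}$ and contributes nothing. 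Strict concavity gives uniqueness of the absolutely continuous part, and the equality case pins the support of $\lambda_{\operatorname{sing}}$ to $\{\omega=1\}$. Case (iii) is identical after exchanging the maximum and minimum of $\omega$.

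The main obstacle I anticipate is analytic rather than algebraic: proving that a maximizer exists and that its absolutely continuous part is genuinely $B_{\mu^\star,\nu^\star}$ requires weak-$\ast$ compactness of measures of bounded mass together with upper semicontinuity of $\mathcal{H}_{qu}$ along weak-$\ast$ convergent sequences. The delicate point is that mass escaping into a singular limit must not manufacture spurious entropy; this is precisely where the logarithmic growth $g(f)\sim\ln f$ as $f\to\infty$ enters, ensuring that concentrating mass is entropy-free and that the decomposition \eqref{RadonNikodym} is compatible with the variational problem. The remaining care concerns convergence of the integrals defining $f_\pm$ near the extremal sets of $\omega$---governed by the dimension and the local profile of $\omega$ (e.g.\ the $|p|^{(\delta-d)/2}$ behavior)---which the standing assumptions on $\omega$ are designed to control.
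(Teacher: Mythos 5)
Your proposal is correct and follows essentially the same route as the paper: the tangent-line bound $g(f)\le g(B_{\mu,\nu})+(\mu\omega+\nu)(f-B_{\mu,\nu})$ is exactly the paper's inequality $\varphi_y(x)\le 0$, the curves $\mathcal{C}_\pm$ and the Cauchy--Schwarz/Jacobian argument (your Gram matrix is the paper's $AC-B^2>0$) delineate $S$ in the same way, and the choice of $(\mu,\nu)$ on the boundary so that the penalty terms cancel, forcing $\mathcal{M}(\lambda_{\operatorname{sing}})=\mathcal{E}(\lambda_{\operatorname{sing}})$ and hence concentration on $\{\omega=1\}$, is identical. The only superfluous element is the final worry about weak-$\ast$ compactness: since the argument explicitly exhibits a competitor attaining the upper bound, no abstract existence step is needed, and indeed the paper dispenses with one.
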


\begin{remark}
If the maximizer is not an integrable function but rather comprises a pure point measure, we say that condensation occurs.
The above theorems give a characterization of dispersion relations for which this is the case: it suffices to check whether $\int \frac{dp}{\omega(p) - \min \omega}$ and $\int \frac{dp}{ \max \omega - \omega(p)}$ are finite or not. Applying this criterion to the examples of Section \ref{sectiondispersion}, we see that condensation occurs 
\begin{itemize}
\item For nearest neighbor interaction without pinning, if $d \geq 2$.
\item For nearest neighbor interaction with pinning, if $d \geq 3$.
\item For long range interactions without pinning, if $d < \delta < d+2$.
\item For the next-to-nearest neighbor interaction without pinning, if $d\geq 3$.
\end{itemize}
\end{remark}

\subsection{Discussion}
In \cite{Spohn2005,Spohn2006}, the collisional invariants of the $4$-phonon Boltzmann equation were characterized for a large class of dispersion relations on $\mathbb{R}^d$ for $d\geq 2$. Namely, under the assumption that $\omega$ is smooth with bounded Hessian outside a manifold of codimension at least $1$, it was shown that the integrable collisional invariants are of the form $\mu \omega(p) + \nu$, where $\mu, \nu$ are 
constants. Then the Rayleigh-Jeans distribution arises as a natural candidate for equilibrium because it is a solution to the Boltzmann equation.

Here we rather characterize the equilibrium distribution constrained not only by the collisional invariants but also by the additional requirements on the initial mass and energy, which are the conserved quantities. We conclude that these additional constraints might require, depending on the specific values of mass and energy, a singular part in the distribution. 
Our results reflect the interplay between the dynamics (as captured by the collisional invariants in Spohn's work) and the thermodynamic constraints imposed on the equilibrium state.


For energies and masses such that the maximizer of the entropy is singular, it is natural to ask whether such singularities can manifest dynamically in the Boltzmann equation. For initial conditions aligned with such vales of energy and mass, non regular behaviors of the system could be expected. It is not clear for example if global solutions with such initial data could still converge to these singular maximizers of the entropy, 
or if the ergodicity would break down, in which case the entropy maximizer would not necessarily correspond to the system’s attractor. In view of previous examples (like the wave kinetic equation associated with the nonlinear Schrödinger equation, or the Nordheim equation for a dilute homogeneous gas of bosons, \cite{Esc2015}), finite time blow up and singularity formation could also be expected for suitable dispersion relations. And, in the context of microscopic FPUT chains, could this correspond to a localization of energy in certain modes, (e.g. breathers), the emergence of coherent structures? or an ergodicity breakdown?

\section{The classical case}

\subsection{Rayleigh-Jeans equilibria with prescribed mass and energy}

Recall that
$$
\alpha = a + (2\pi)^d \left( \int \frac{\dd p}{\omega - a} \right)^{-1} > a, \qquad \beta =  1 - (2\pi)^d \left( \int \frac{\dd p}{1- \omega} \right)^{-1} < 1.
$$

To know whether there exists a RJ equilibrium with given mass and energy, it suffices to compare the quotient of the mass and energy to the thresholds $\alpha$ and $\beta$ - this is the content of the following theorem.

\begin{theorem} \label{Theorem1}
Given $\mathcal{M}_0, \mathcal{E}_0 \in (0,\infty)$, we want to understand whether a Rayleigh-Jeans (RJ) equilibrium $R_{\mu,\nu}$ exists, which satisfies
\begin{equation}
\label{M0E0}
\mathcal{M}(R_{\mu,\nu}) = \mathcal{M}_0, \qquad \mathcal{E}(R_{\mu,\nu}) = \mathcal{E}_0.
\end{equation}
\begin{itemize}
\item[(i)] If $ \displaystyle
\alpha < \frac{\mathcal{E}_0}{\mathcal{M}_0} < \beta$, then for any $\mathcal{M}_0, \mathcal{E}_0 > 0$, there exists a unique RJ equilibrium satisfying \eqref{M0E0}.
\item[(ii)] If $\displaystyle \frac{\mathcal{E}_0}{\mathcal{M}_0} = \beta <1$ (which implies that $\displaystyle \int \frac{\dd p}{1- \omega} < \infty$), there exists a unique RJ equilibrium satisfying \eqref{M0E0}. It is furthermore such that $\mu = -\nu$, $\nu > 0$.
\item[(iii)] If $\displaystyle \frac{\mathcal{E}_0}{\mathcal{M}_0} = \alpha >a$ (which implies that $\displaystyle \int \frac{\dd p}{\omega-a} < \infty$), there exists a unique RJ equilibrium satisfying \eqref{M0E0}. It is furthermore such that $\nu = -\mu a$, $\mu > 0$.
\item[(iv)] If $\displaystyle \frac{\mathcal{E}_0}{\mathcal{M}_0} > \beta$ or $\displaystyle \frac{\mathcal{E}_0}{\mathcal{M}_0} < \alpha$, there does not exist a RJ equilibrium satisfying \eqref{M0E0}.
\end{itemize}
\end{theorem}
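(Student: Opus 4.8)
The plan is to exploit the scaling structure to reduce the two–dimensional problem of matching $(\mathcal M_0,\mathcal E_0)$ to the one–dimensional problem of matching the ratio $q:=\mathcal E_0/\mathcal M_0$. Replacing $(\mu,\nu)$ by $(t\mu,t\nu)$ with $t>0$ sends $R_{\mu,\nu}\mapsto t^{-1}R_{\mu,\nu}$, hence $\mathcal M\mapsto t^{-1}\mathcal M$ and $\mathcal E\mapsto t^{-1}\mathcal E$. Thus $Q(\mu,\nu):=\mathcal E(R_{\mu,\nu})/\mathcal M(R_{\mu,\nu})$ depends only on the direction of $(\mu,\nu)$, while along a fixed admissible ray $\mathcal M$ is a strictly decreasing bijection of $t$ onto $(0,\infty)$ (as soon as $\mathcal M<\infty$ on that ray). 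So once a direction with $Q=q$ is found, a unique scaling matches $\mathcal M_0$, and then $\mathcal E_0$ is matched automatically. Everything reduces to describing the range and the fibers of $Q$ over the admissible cone \eqref{rangemunu}.

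Next I would parametrize the admissible directions by the pole $r:=-\nu/\mu$. Positivity of $\mu\omega+\nu$ on $\omega\in[a,1]$ forces three branches: $\mu>0$ with $r<a$ (the limit $r\to a$ being the boundary ray $\nu=-\mu a$); $\mu=0$, $\nu>0$ (the constant equilibrium); and $\mu<0$ with $r>1$ (the limit $r\to 1$ being the boundary ray $\mu=-\nu$). On each branch $\mu$ cancels and $Q$ becomes the weighted average $\langle\omega\rangle_r=\big(\int\omega\,w_r\,\dd p\big)\big/\big(\int w_r\,\dd p\big)$ with weight $w_r=1/|\omega-r|$. The endpoint values follow from the elementary identities $\int\frac{\omega}{\omega-a}\,\dd p=a\int\frac{\dd p}{\omega-a}+(2\pi)^d$ and $\int\frac{\omega}{1-\omega}\,\dd p=\int\frac{\dd p}{1-\omega}-(2\pi)^d$, giving $\lim_{r\to a}Q=\alpha$, $\lim_{r\to 1}Q=\beta$, and $\lim_{|r|\to\infty}Q=\bar\omega:=(2\pi)^{-d}\int\omega\,\dd p$, in agreement with \eqref{defalphabeta}.

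The heart of the argument is monotonicity of $Q$ in $r$ on each branch. Differentiating the weighted average gives $\frac{d}{dr}Q=\operatorname{Cov}_r\!\big(\omega,\partial_r\log w_r\big)$, the covariance taken against the normalized weight $w_r\,\dd p$. Since $\partial_r\log w_r=1/(\omega-r)$ on the branch $r<a$ and $-1/(r-\omega)$ on the branch $r>1$, in both cases it is a strictly monotone function of $\omega$; because $\omega$ is non-constant (as $a<1$), Chebyshev's correlation inequality forces this covariance to be strictly negative. Hence $Q$ is strictly monotone on each branch, and assembling the three pieces continuously through $\mu=0$ shows that, as the direction sweeps the cone from the ray $\nu=-\mu a$ to the ray $\mu=-\nu$, $Q$ increases strictly and bijectively from $\alpha$ to $\beta$.

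Finally, this bijectivity yields the four cases. For $q\in(\alpha,\beta)$ there is a unique interior direction, giving (i). For $q=\beta<1$ (resp.\ $q=\alpha>a$) the only admissible direction is the boundary ray $\mu=-\nu$, $\nu>0$ (resp.\ $\nu=-\mu a$, $\mu>0$), giving (ii) (resp.\ (iii)); note that $q=\beta<1$ forces $\beta\neq 1$, i.e.\ $\int\frac{\dd p}{1-\omega}<\infty$, which is exactly the finiteness needed for $\mathcal M<\infty$ on that ray, and symmetrically for $\alpha$. For $q\notin[\alpha,\beta]$ no admissible direction exists, giving (iv). The delicate point to make rigorous is the monotonicity step: justifying differentiation under the integral sign and the strict sign of the covariance via the correlation inequality. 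A convenient cross-check is that the Jacobian of $(\mu,\nu)\mapsto(\mathcal M,\mathcal E)$ equals $B^2-AC$ with $A=\int(\mu\omega+\nu)^{-2}\dd p$, $B=\int\omega(\mu\omega+\nu)^{-2}\dd p$, $C=\int\omega^2(\mu\omega+\nu)^{-2}\dd p$, which is strictly negative by Cauchy--Schwarz (again since $\omega$ is non-constant), providing local invertibility throughout the interior.
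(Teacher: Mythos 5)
Your proposal is correct and follows essentially the same route as the paper: reduce by homogeneity to the ratio $\mathcal{E}/\mathcal{M}$ as a function of the direction of $(\mu,\nu)$ alone, show this function is strictly monotone across the admissible cone, and evaluate its limits $\alpha$, $\beta$ and $\bar\omega=(2\pi)^{-d}\int\omega\,\dd p$ at the boundary rays and at $\mu=0$. Your weighted average $\langle\omega\rangle_r$ is literally the paper's function $F(x)$ with $x=-r$, and your Chebyshev-covariance proof of monotonicity is algebraically equivalent to the paper's computation of $F'(x)>0$ via Jensen's inequality.
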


\begin{proof} 
We will denote $\mathcal{M}(\mu,\nu) = \mathcal{M}(R_{\mu,\nu})$. It follows from the identity
\begin{equation}
\label{identitymunu}
\mu \mathcal{E}(\mu,\nu) + \nu \mathcal{M}(\mu,\nu) = (2\pi)^d
\end{equation}
that
$$
\mathcal{E}(\mu,\nu) = \frac{1}{\mu}((2\pi)^d - \nu \mathcal{M}(\mu,\nu))
$$
Parametrizing $\mu$ and $\nu$ as
$$
\mu = \rho \cos \varphi, \qquad \nu = \rho \sin \varphi,
$$
there holds
$$
\mathcal{M}(\mu,\nu)) = \frac{1}{\rho} \mathcal{M}(\cos \varphi,\sin \varphi) = \frac{1}{\rho} \mathcal{M}(\varphi), \qquad \mathcal{E}(\mu,\nu)) = \frac{1}{\rho} \mathcal{E}(\cos \varphi,\sin \varphi) = \frac{1}{\rho \cos \varphi}((2\pi)^d - \sin \varphi \mathcal{M}(\varphi)).
$$
One can then eliminate $\rho$ and obtain that the equation \eqref{M0E0} is equivalent to
$$
\frac{\mathcal{E}_0}{\mathcal{M}_0} = \frac{(2\pi)^d}{\cos \varphi \mathcal{M} (\varphi)} - \tan \varphi,
$$
or in other words
$$
\frac{\mathcal{E}_0}{\mathcal{M}_0} = F(\tan \varphi) \quad \mbox{with} \quad F(x) = (2\pi)^d \left( \int \frac{\dd p}{\omega + x} \right)^{-1} - x.
$$
The allowed range for $\mu$ and $\nu$ in \eqref{rangemunu} translates into the restriction that 
$$
\varphi \in (\varphi^*,\frac{3\pi}{4}) \qquad \mbox{with} \;\;\varphi^* = - \arctan (a).
$$
which means that 
$$
\tan \varphi \in (-\infty,-1) \cup (-a,\infty).
$$
There remains to compute the image of that set by $F$!

First, we notice that $F$ is strictly increasing since
$$
F'(x) = \frac{\frac{1}{(2\pi)^d} \int \frac{\dd p}{(\omega+x)^2}}{\left( \frac{1}{(2\pi)^d} \int \frac{\dd p}{\omega+x} \right)^2} - 1 > 0
$$
by Jensen's inequality.

Thus, there remains to compute the value of $F$ at the points $-\infty, -1, -a, \infty$.
\begin{itemize}
\item $\displaystyle F(-a+)= a + (2\pi)^d \left( \int \frac{1}{\omega -a} \dd p \right)^{-1}$. If $\int \frac{1}{\omega -a} < \infty$, the limiting RJ equilibrium (corresponding to $\varphi \to \varphi^*$) has finite mass. It is given by $\varphi = \varphi^*$, or in other words $\nu = -a \mu$, $\mu>0$. 
\item $\displaystyle F(-1-) = 1 - (2\pi)^d \left( \int \frac{1}{1 - \omega} \dd p \right)^{-1}$. If $\int \frac{1}{1- \omega} < \infty$, the limiting RJ equilibrium (corresponding to $\varphi \to \frac {3\pi} 4$) has finite mass. It is given by $\varphi = \frac{3\pi}4$ or in other words $\mu = -\nu$, $\nu>0$.
\item $\displaystyle F(\infty) = \frac{I}{(2\pi)^d}$, with $\displaystyle I = \int \omega \dd p$. Indeed, we can expand
$$
\frac{1}{\omega + x} = \frac{1}{x} - \frac{\omega}{x^2} + O_{\infty}(\frac{1}{x^3}) \qquad \mbox{so that} \qquad \int \frac{\dd p}{\omega + x} \sim \frac{(2\pi)^d}{x} - \frac{I}{x^2} + O_\infty \left( \frac 1 {x^3} \right).
$$
We could have found this expression directly since the limit $\tan \varphi \to \infty$ corresponds to the limit $\varphi \to \frac \pi 2 -$ or equivalently $\mu \to 0+$, in which case $\frac{\mathcal{E}(R_{0,\nu})}{\mathcal{M}(R_{0,\nu})} = \frac{I}{(2\pi)^d}$.
\item $\displaystyle F(-\infty) = \frac{I}{(2\pi)^d}$ since the expansion above remains valid as $x \to -\infty$. Here again, the limit $\tan \varphi \to - \infty$ corresponds to the limit $\varphi \to\frac \pi 2 +$ or equivalently $\mu \to 0-$, in which case $\frac{\mathcal{E}(R_{0,\nu})}{\mathcal{M}(R_{0,\nu})} = \frac{I}{(2\pi)^d}$.
\end{itemize}
Therefore, we find that the image of $(-\infty,-1) \cup (-a,\infty)$ by $F$ is $(a,1)$, which was the desired result. 
\end{proof}

\subsection{Proof of Theorem \ref{maintheorem}}

\begin{proof} We start with a measure $\lambda$ satisfying
$$
\mathcal{M}(\lambda) = \mathcal{M}_0, \qquad \mathcal{E}(\lambda) = \mathcal{E}_0
$$
which we decompose as in \eqref{RadonNikodym}. Applying the inequality $\ln x \leq x -1$ to $\frac{f}{R_{\mu,\nu}}$ and integrating gives
$$
\int \ln f \dd p - \int \ln R_{\mu,\nu} \dd p \leq \int f (\mu \omega + \nu) \dd p - (2\pi)^d.
$$
Starting from this inequality and using in addition that $\mu \omega + \nu \geq 0$ and the identity \eqref{identitymunu}, we obtain
\begin{align*}
\mathcal{H}_{cl}(\lambda) = \mathcal{H}_{cl}(f) & \leq \mathcal{H}_{cl}(R_{\mu,\nu}) + \int f (\mu \omega + \nu) \dd p - (2\pi)^d \\
& \leq \mathcal{H}_{cl}(R_{\mu,\nu}) + \int (\mu \omega + \nu)(f \dd p + \dd \lambda_{\operatorname{sing}}) - \mu \mathcal{E}(R_{\mu,\nu}) - \nu \mathcal{M}(R_{\mu,\nu}).
\end{align*}
By the mass and energy constraints on $\lambda$, this means that
\begin{equation}
\label{inegalitedebase}
\mathcal{H}_{cl}(\lambda) \leq \mathcal{H}_{cl}(R_{\mu,\nu}) + \mu (\mathcal{E}_0-\mathcal{E}(R_{\mu,\nu})) + \nu(\mathcal{M}_0 -\mathcal{M}(R_{\mu,\nu})).
\end{equation}

\medskip
\noindent $(i)$ 
Choosing $\mu$ and $\nu$ in the above equation such that $\mathcal{E}(R_{\mu,\nu}) = \mathcal{E}_0$ and $\mathcal{M}(R_{\mu,\nu}) = \mathcal{M}_0$ (which is possible by Theorem \ref{Theorem1} since $\alpha < \frac{\mathcal{E}_0}{\mathcal{M}_0} < \beta$) gives the desired result.

\medskip
\noindent $(ii)$ We choose $\mu$ and $\nu$ such that
$$
\begin{cases}
\mathcal{E}_0 - \mathcal{E}(R_{\mu,\nu}) = \mathcal{M}_0 - \mathcal{M}(R_{\mu,\nu}) \\
\mathcal{E}(R_{\mu,\nu}) = \beta \mathcal{M}(R_{\mu,\nu})
\end{cases}
\quad 
\mbox{or equivalently}
\quad
\begin{cases}
\mathcal{M}(R_{\mu,\nu}) = \frac{\mathcal{M}_0 - \mathcal{E}_0}{1-\beta} \\ \mathcal{E}(R_{\mu,\nu}) = \beta \frac{\mathcal{M}_0 - \mathcal{E}_0}{1-\beta}.
\end{cases}
$$
Such $\mu$ and $\nu$ exist by Theorem \ref{Theorem1}, which gives furthermore that $\mu = -\nu$, with $\nu>0$. With this choice of $\mu$ and $\nu$, inequality \eqref{inegalitedebase} becomes $\mathcal{H}_{cl}(\lambda) \leq \mathcal{H}_{cl}(R_{\mu,\nu})$.

Examining the equality case in the derivation of inequality \eqref{inegalitedebase}, we see that necessarily $f = R_{\mu,\nu}$. This implies that $\lambda_{\operatorname{sing}}$ is such that
$$
\begin{cases}
\mathcal{M}(\lambda_{\operatorname{sing}}) = \mathcal{M}_0 - \mathcal{M}(R_{\mu,\nu}) = \mathcal{M}_0 - \frac{\mathcal{M}_0 - \mathcal{E}_0}{1-\beta} = \frac{1}{1-\beta}(\mathcal{E}_0 - \beta \mathcal{M}_0) \\
\mathcal{E}(\lambda_{\operatorname{sing}}) = \mathcal{E}_0 - \mathcal{E}(R_{\mu,\nu}) = \mathcal{E}_0 -  \beta \frac{\mathcal{M}_0 - \mathcal{E}_0}{1-\beta} = \frac{1}{1-\beta}(\mathcal{E}_0 - \beta \mathcal{M}_0).
\end{cases}
$$
Since the mass and energy of $\lambda_{\operatorname{sing}}$ are equal, it is necessarily supported on the set where $\omega$ takes the value $1$. Conversely, such a measure $\mu$ achieves the maximum value for the entropy, namely $\mathcal{H}_{cl}(R_{\mu,\nu})$.
\end{proof}

\section{The quantum Case} 

\subsection{Bose-Einstein equilibria with prescribed mass and energy}

\begin{theorem} \label{theorem1_Quantum} Given $\mathcal{M}_0,\mathcal{E}_0$, we want to understand whether a Bose-Einstein (BE) equilibrium $B_{\mu,\nu}$ exists which satisfies
\begin{equation}
\label{REM}
\mathcal{M}(B_{\mu,\nu}) = \mathcal{M}_0, \quad \mathcal{E}(B_{\mu,\nu}) = \mathcal{E}_0.
\end{equation}
\begin{itemize}
\item[(i)] If $\displaystyle \int \frac{\dd p}{\omega -a} = \int \frac{\dd p}{1- \omega} = \infty$, then for any $\mathcal{E}_0$ and $\mathcal{M}_0$, there exists a unique (BE) equilibrium satisfying \eqref{REM}.
\item[(ii)] If $\displaystyle \int \frac{\dd p}{\omega -a} + \int \frac{\dd p}{1- \omega} < \infty$, define the two parameterized curves
in $(\mathcal{M},\mathcal{E})$ space:
\begin{align*}
& \mathcal{C}_+ = \{ (\mathcal{M}(B_{\mu,\nu}), \mathcal{E}(B_{\mu,\nu})), \; (\mu,\nu) = (-t,t), \; t \in (0,\infty) \} \\
& \mathcal{C}_- = \{ (\mathcal{M}(B_{\mu,\nu}), \mathcal{E}(B_{\mu,\nu})), \; (\mu,\nu) = (t,-at), \; t \in (0,\infty) \}.
\end{align*}
Both curves can be represented by a graph over the mass $\mathcal{M}$:
$$
\mathcal{C}_\pm = \{ (\mathcal{M}, f_\pm(\mathcal{M}), \; \mathcal{M} \in (0,\infty).\}
$$
The functions $f_{\pm}$ are differentiable, increasing, 
$$
f_+(0) = f_-(0) \qquad 
\mbox{and as $\mathcal{M} \to \infty$}, \qquad 
\begin{cases}
f_+(\mathcal{M}) \approx \alpha \mathcal{M} \\
f_-(\mathcal{M}) \approx \beta \mathcal{M}
\end{cases}
$$
(recall that $\alpha$ and $\beta$ are defined in \eqref{defalphabeta}).

Let $S$ be the subset of $(\mathcal{M},\mathcal{E})$ bounded above by $\mathcal{C}_+$ and below by $\mathcal{C}_-$. Then there exists a (BE) equilibrium satisfying \eqref{REM} if and only if $(\mathcal{M}_0,\mathcal{E}_0) \in S$, and then it is unique.
\end{itemize}
\end{theorem}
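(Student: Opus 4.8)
The plan is to study the map $\Phi:(\mu,\nu)\mapsto(\mathcal M(B_{\mu,\nu}),\mathcal E(B_{\mu,\nu}))$ on the set of admissible parameters. First I would fix the domain: since $\omega(\mathbb T^d)\subseteq[a,1]$ with the extreme values attained only on null sets, $B_{\mu,\nu}$ is a well-defined nonnegative function exactly when $\mu\omega+\nu>0$ on $\{a<\omega<1\}$, i.e. on the open wedge $\mathcal U=\{\mu a+\nu>0\}\cap\{\mu+\nu>0\}$, which is precisely the interior of the region \eqref{rangemunu}. Its two bounding rays are $(\mu,\nu)=(-t,t)$ and $(\mu,\nu)=(t,-at)$ with $t>0$, whose images are by definition $\mathcal C_+$ and $\mathcal C_-$. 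On the open wedge the integrand is bounded, so $B_{\mu,\nu}$ is integrable; on the two rays, where $\mu\omega+\nu$ vanishes on $\{\omega=1\}$, resp. $\{\omega=a\}$, integrability holds iff $\int\frac{\dd p}{1-\omega}<\infty$, resp. $\int\frac{\dd p}{\omega-a}<\infty$. This dichotomy is exactly what separates cases $(i)$ and $(ii)$.

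For uniqueness I would introduce the potential $\Psi(\mu,\nu)=-\int_{\mathbb T^d}\ln\!\big(1-e^{-(\mu\omega+\nu)}\big)\dd p$. Differentiating under the integral gives $\nabla\Psi=-(\mathcal E,\mathcal M)$, and the Hessian of $\Psi$ has diagonal entries $A_2,A_0$ and off-diagonal entry $A_1$, where $A_k=\int_{\mathbb T^d}\omega^k\,\frac{e^{s}}{(e^{s}-1)^2}\dd p$ and $s=\mu\omega+\nu$. Since $A_0>0$ and $A_0A_2-A_1^2>0$ by the Cauchy–Schwarz inequality applied to the positive weight $\frac{e^{s}}{(e^{s}-1)^2}$ (strict because $\omega$ is non-constant), $\Psi$ is strictly convex on the convex set $\mathcal U$; hence $\nabla\Psi$, and therefore $\Phi$, is injective. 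The same inequality shows the Jacobian of $\Phi$ equals $A_1^2-A_0A_2<0$ everywhere, so $\Phi$ is a local diffeomorphism. This disposes of uniqueness in both cases at once.

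Next I would analyze the two rays to construct $f_\pm$. Along $(\mu,\nu)=(-t,t)$ one has $s=t(1-\omega)$, so $\mathcal M(t)=\int_{\mathbb T^d}(e^{t(1-\omega)}-1)^{-1}\dd p$ is smooth and strictly decreasing, running from $+\infty$ as $t\to0^+$ (in case $(ii)$, by monotone convergence using $\int\frac{\dd p}{1-\omega}<\infty$) to $0$ as $t\to\infty$; thus $\mathcal C_+$ is a graph $\mathcal E=f_+(\mathcal M)$ over $\mathcal M\in(0,\infty)$, increasing and differentiable by the implicit function theorem. Using $\frac{1}{e^{s}-1}=\frac1s-\frac12+O(s)$, the ratio $\mathcal E/\mathcal M$ tends as $t\to0^+$ to $\big(\int\frac{\dd p}{1-\omega}\big)^{-1}\int\frac{\omega\,\dd p}{1-\omega}=\beta$, while as $\mathcal M\to0$ the mass concentrates near $\{\omega=1\}$, giving $f_+(0^+)=0$; the identical computation on $(t,-at)$ produces $f_-$ with limiting slope $\alpha$ and $f_-(0^+)=0$ (so the asymptotic slopes are $\beta$ along $\mathcal C_+$ and $\alpha$ along $\mathcal C_-$, matching \eqref{defalphabeta}). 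In case $(i)$ these rays give infinite mass, so the boundary curves recede and only the cone remains.

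Finally, existence — that the image of $\Phi$ is exactly the interior of $S$ — is where the real work lies. One clean route fixes the mass: since $\partial_\nu\mathcal M=-A_0<0$, the level set $\{\mathcal M=\mathcal M_0\}$ is a single arc in $\mathcal U$ joining the two boundary rays, and because the Jacobian never vanishes, $\mathcal E$ is strictly monotone along it, sweeping the interval whose endpoints are the ray limits $f_-(\mathcal M_0)$ and $f_+(\mathcal M_0)$; the intermediate value theorem then yields a (unique) solution precisely when $f_-(\mathcal M_0)<\mathcal E_0<f_+(\mathcal M_0)$, and in case $(i)$ the interval opens up to the whole fibre $\{a\mathcal M_0<\mathcal E<\mathcal M_0\}$. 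An alternative is to invoke invariance of domain (so $\Phi(\mathcal U)$ is open) together with a properness argument showing that a sequence in $\mathcal U$ whose image converges into $\operatorname{int}S$ cannot escape to the rays ($\Phi$ would accumulate on $\mathcal C_\pm$) nor to $\rho\to0,\infty$ (mass would blow up or vanish), whence $\Phi(\mathcal U)$ is also closed in the connected set $\operatorname{int}S$. The main obstacle in either approach is the boundary/limit analysis: establishing enough uniformity that approaching $\partial\mathcal U$ genuinely drives $\Phi$ onto $\partial(\operatorname{int}S)$, and, in case $(ii)$, correctly gluing the finite-mass ray regime to the $\rho\to0,\infty$ limits so that $\operatorname{int}S$ is captured exactly — with the strict containment in the cone coming from $\omega$ attaining its extrema only on null sets.
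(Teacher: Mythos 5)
Your proposal is correct and follows the same overall strategy as the paper -- study the map $(\mu,\nu)\mapsto(\mathcal M(B_{\mu,\nu}),\mathcal E(B_{\mu,\nu}))$ on the wedge \eqref{rangemunu}, identify $\mathcal C_\pm$ as the images of the two boundary rays, and reduce everything to the Cauchy--Schwarz inequality $A_0A_2-A_1^2>0$ -- but you organize the two key steps differently. For injectivity, the paper argues geometrically: it passes to polar coordinates $(\rho,\varphi)$ in the $(\mu,\nu)$-plane, shows each radial curve $\mathcal C_\varphi$ is a graph of $\mathcal E$ increasing in $\mathcal M$, and uses the constant sign of the exterior product $\partial_\rho F\times\partial_\varphi F$ (the same determinant $AC-B^2$) to conclude the curves are ordered and foliate $S$. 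Your route through the strictly convex potential $\Psi(\mu,\nu)=-\int\ln\bigl(1-e^{-(\mu\omega+\nu)}\bigr)\dd p$ with $\nabla\Psi=-(\mathcal E,\mathcal M)$ gets global injectivity in one line from convexity of the wedge, which is cleaner and does not require the foliation to be set up first. For surjectivity, you sweep the fibres $\{\mathcal M=\mathcal M_0\}$ (using $\partial_\nu\mathcal M=-A_0<0$ and the nonvanishing Jacobian) where the paper sweeps the angular curves $\mathcal C_\varphi$; these are dual slicings of the same picture and both leave the boundary/limit analysis (that the fibres genuinely terminate on $\mathcal C_\pm$, and that $\rho\to0,\infty$ matches the classical asymptotics) at the same informal level -- you are candid about this, and the paper is no more detailed there. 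Your asymptotic computation along the rays, giving slope $\beta$ for $\mathcal C_+$ (where $\mu=-\nu$) and $\alpha$ for $\mathcal C_-$ (where $\nu=-a\mu$), is the internally consistent assignment: it agrees with Theorem \ref{Quantum_maintheorem} and with $S$ being bounded above by $\mathcal C_+$, whereas the displayed asymptotics in the statement of Theorem \ref{theorem1_Quantum} have $\alpha$ and $\beta$ interchanged; this is a typo in the statement, not an error in your argument. Finally, you actually sketch case (i) (the fibres open up to the full cone $a\mathcal M_0<\mathcal E_0<\mathcal M_0$), which the paper omits.
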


For the sake of concision, the statement of the theorem only addresses the cases where both of $\int \frac{\dd p}{\omega -a}$ and $\int \frac{\dd p}{1- \omega}$ are finite or infinite, but the extension to the remaining cases is obvious.

\begin{proof} $(i)$ is the simpler case, thus it will be omitted and we focus on the second case.

\medskip

\noindent
$(ii)$ We start by proving the desired properties of $\mathcal{C}_+$ and $\mathcal{C}_-$. First observe that the finiteness of $\mathcal{M}(B_{-t,t})$ for some $t$ is equivalent to the finiteness for any $t$, and also equivalent to the finiteness of $\int \frac{\dd p}{1-\omega}$. Thus, the curves $\mathcal{C}_+$ and $\mathcal{C}_-$ are well-defined. 

Furthermore, 
\begin{align*}
& \frac{d}{dt} \mathcal{M}(B_{-t,t}) = \frac{d}{dt}  \int \frac{\dd p}{e^{t-t\omega}-1} = - \int \frac{1-\omega}{(e^{t-t\omega}-1)^2} \dd p < 0 \\
& \frac{d}{dt} \mathcal{E}(B_{-t,t}) = \frac{d}{dt}  \int \frac{\omega \dd p}{e^{t-t\omega}-1} =- \int \frac{\omega(1-\omega)}{(e^{t-t\omega}-1)^2} \dd p < 0
\end{align*}
(also notice that the integrals above are finite if $\int \frac{\dd p}{1-\omega} < \infty$), so that $f_{+}$ is increasing, with a similar argument for $f_{-}$. 

On the one hand, if $|\mu|, |\nu| \to \infty$, then $\mathcal{M}(B_{-t,t}) + \mathcal{E}(B_{-t,t}) \to 0$, which implies that $f_+(0) = f_-(0) = 0$. On the other hand, if $\mu,\nu \to 0$, then $\mathcal{M}(B_{\mu,\nu}) \approx \mathcal{M}(R_{\mu,\nu})$ and $\mathcal{E}(B_{\mu,\nu}) \approx \mathcal{E}(R_{\mu,\nu})$, which gives the linear equivalents for $f_{\pm}$ at $\infty$.

We now consider the function
$$
F(\rho,\varphi) = (\mathcal{M}(B_{\mu,\nu}), \mathcal{E}(B_{\mu,\nu})), \quad \mbox{with} \quad \begin{cases} \mu = \rho \cos \varphi \\ \nu = \rho \sin \varphi \end{cases}
$$
which, due to the restriction \eqref{rangemunu}, is defined on the domain
$$
D = \{ (\rho,\phi), \;\rho >0, \; \varphi \in [- \arctan a , \frac{3\pi}{4}] \}.
$$
We claim that $F$ is bijection from $D$ to $S$, which is equivalent to the desired statement. Our argument is as follows: consider the curves
$$
\mathcal{C}_\varphi = \{ F(\rho,\varphi), \; \rho>0 \}.
$$
These curves originate in $(0,0)$ and go to $\infty$; obviously $\mathcal{C}_{\frac{3\pi}{4}} = \mathcal{C}_{+}$ and $\mathcal{C}_{- \arctan a} = \mathcal{C}_{-}$.  We claim that they are ordered ($\mathcal{C}_\phi$ above $\mathcal{C}_\varphi'$ for $\varphi > \varphi'$ and foliate $S$. To check that this is the case, we compute that
\begin{align*}
\begin{cases}
\partial_\rho \mathcal{M}(B_{\mu,\nu}) = - \sin \varphi A - \cos \varphi B \\
\partial_\rho \mathcal{E}(B_{\mu,\nu}) =  - \sin \varphi B - \cos \varphi C,
\end{cases}
\qquad
\begin{cases}
\partial_\varphi \mathcal{M}(B_{\mu,\nu}) = \rho \sin \varphi B - \rho \cos \varphi A \\
\partial_\varphi \mathcal{E}(B_{\mu,\nu}) =  \rho \sin \varphi C - \rho \cos \varphi B,
\end{cases}
\end{align*}
where
$$
A = \int \frac{e^{\mu \omega + \nu}}{(e^{\mu \omega + \nu} -1)^2} \dd p, \qquad B = \int \frac{e^{\mu \omega + \nu} \omega}{(e^{\mu \omega + \nu} -1)^2} \dd p, \qquad C = \int \frac{e^{\mu \omega + \nu} \omega^2}{(e^{\mu \omega + \nu} -1)^2} \dd p.
$$

We learn two things from these formulas. First, 
since $\partial_\rho \mathcal{M}(B_{\mu,\nu})<0$ and $\partial_\rho \mathcal{E}(B_{\mu,\nu})<0)$, we deduce 
that each $\mathcal{C}_\varphi$ is the graph of $\mathcal{E}$ as an increasing function of $\mathcal{M}$.

Second, we can compute the exterior product
$$
\partial_\rho F(\rho,\varphi) \times \partial_\varphi F(\rho,\varphi) = AC - B^2 > 0,
$$
as follows by the Cauchy-Schwarz inequality. It has a constant sign, which means that $\partial_\varphi F$ is always pointing in the same direction compared to $\partial_\rho F$, and thus that the curves are ordered.

Finally, in the limit $(\mu,\nu) \to 0$ or $(\mathcal{E},\mathcal{M}) \to \infty$, everything converges to the classical case, which was the object of the previous section.

Putting together the arguments above, we obtain the desired result, namely that the points in $S$ are exactly the mass and energy of Bose-Einstein condensates, and that there is a one-to-one correspondence between points in $S$ and Bose-Einstein condensates.
\end{proof}

\subsection{Proof of Theorem \ref{Quantum_maintheorem}}

\begin{figure}
\label{figureEM}
\begin{tikzpicture}[>=stealth,scale=1.2]

\def\a{0.1}   
\def\b{0.8}   
\def\alph{0.4} 

\draw[->] (-1,0) -- (10,0) node[right] {$\mathcal{M}$};
\draw[->] (0,-1) -- (0,8.5) node[above] {$\mathcal{E}$};

\draw[thick,black] (0,0) -- (8,8) node[anchor=west] {$\mathcal{E} = \mathcal{M}$};
\draw[thick,black] (0,0) -- (9,{9*\a}) node[anchor=west] {$\mathcal{E} = a\mathcal{M}$};

\begin{scope}
    \clip (0,0) -- plot[smooth,domain=0:10,samples=200] (\x,{(\b*\x)-0.1*sin(deg(3*\x))/sqrt(1+\x)})
          -- plot[smooth,domain=10:0,samples=200] (\x,{(\alph*\x)+0.1*sin(deg(3*\x))/sqrt(1+\x)}) -- cycle;
    \fill[blue!6] (0,0) rectangle (10,8.5);
\end{scope}
\begin{scope}
    \clip (0,0) -- plot[smooth,domain=0:10,samples=200] (\x,{(\b*\x)-0.1*sin(deg(3*\x))/sqrt(1+\x)})
          -- plot[smooth,domain=10:0,samples=200] (\x,{(\x)}) -- cycle;
  \fill[gray!3, dashed] (0,0) rectangle (8,8.5);
\end{scope}
\begin{scope}
    \clip (0,0) -- plot[smooth,domain=0:10,samples=200] (\x,{(\alph*\x)+0.1*sin(deg(3*\x))/sqrt(1+\x)})
         -- plot[smooth,domain=10:0,samples=200] (\x,{(\a*\x)}) -- cycle;
    \fill[gray!3, dashed] (0,0) rectangle (9,8.5);
\end{scope}
\node at (5,3.3) [anchor=north] {$S$};

\draw[thick,black,smooth,domain=0:10,samples=200] 
    plot (\x,{(\b*\x)-0.1*sin(deg(3*\x))/sqrt(1+\x)}) node[anchor=south west] {{\textbf{$\mathcal{C}_+$}}};
\draw[thick,black,smooth,domain=0:10,samples=200] 
    plot (\x,{(\alph*\x)+0.1*sin(deg(3*\x))/sqrt(1+\x)}) node[anchor=north] {$\mathcal{C}_-$};

\draw[dashed,black] (5,{5*\b-0.1}) -- (11,{11*\b-0.1}) node[anchor=west] {$\mathcal{E} = \beta \mathcal{M}$};
\draw[dashed,black] (5,{5*\alph+0.1}) -- (11,{11*\alph+0.1}) node[anchor=west] {$\mathcal{E} = \alpha \mathcal{M}$};

\draw[] (5.5,{5.5*\alph+0.1}) -- (7.5,{7.5*\alph+0.1}) node[midway,below,sloped,rotate=0] 
    {\scriptsize $(\mathcal{M}(B_{\mu,\nu}), \mathcal{E}(B_{\mu,\nu}); \mu = -\nu)$};
    
\draw[] (5.5,{5.5*\b-0.1}) -- (6.5,{6.5*\b-0.1}) node[midway,above,sloped,rotate=0] 
        {\scriptsize $(\mathcal{M}(B_{\mu,\nu}), \mathcal{E}(B_{\mu,\nu}); \nu = -a \mu)$};

\end{tikzpicture}
\caption{For a given mass and energy, $(\mathcal{M}_0, \mathcal{E}_0) \in S$, the quantum entropy maximizer is a regular Bose distribution $B_{\mu,\nu}$. In the remaining light gray regions, an additional singular measure is needed to maximize the entropy. The dashed lines represent the asymptotic with $\mathcal{M}$ behavior of the curves $\mathcal{C}_{\pm}$.}
 \label{fig:Quantum picture}
\end{figure}
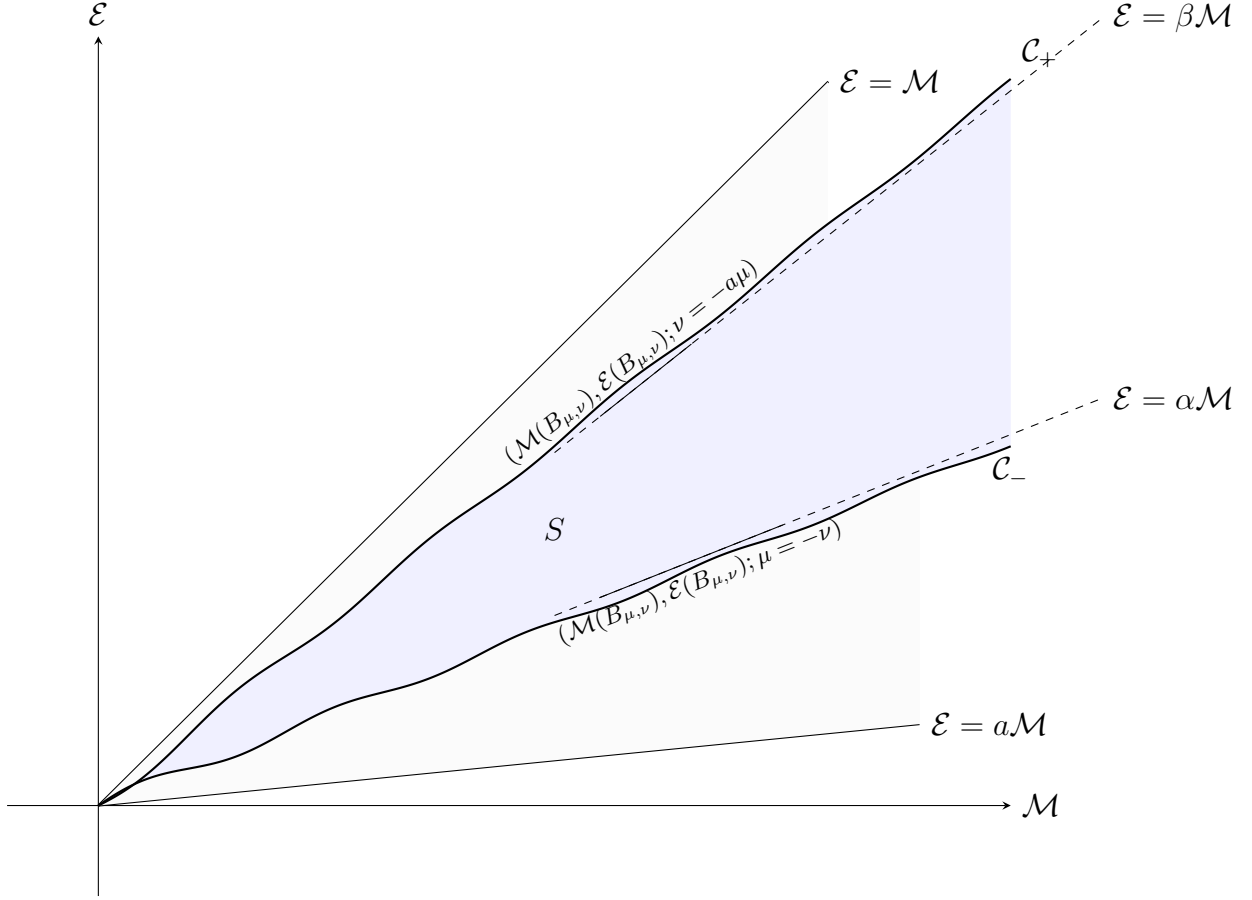

\begin{proof}
Let a measure $\lambda$ so that $$\mathcal{M}(\lambda) =  \mathcal{M}_0,\qquad \mathcal{E}(\lambda)=\mathcal{E}_0.$$
We remind that the entropy for a general measure $\lambda$ decomposed as before as $\dd \lambda = f \dd p + \dd \lambda_{sing}$ is 
$$\mathcal{H}_{qu}(\lambda)  = \int_{ \mathbb{T}^d }[ (1+f)\ln(1+f)  - f \ln(f)] \dd p, $$
and this definition is motivated in the introduction. 

We compute 
\begin{align*}
    & \mathcal{H}_{qu}(\lambda) - \mathcal{H}_{qu}(B_{\mu,\nu}) \\ &= 
    \int 
    [(1+f) \ln( 1+f)  - f\ln(f) ] \ \dd p - \int [ (1+B_{\mu,\nu}) \ln( 1+B_{\mu,\nu})  - B_{\mu,\nu} \ln( B_{\mu,\nu}) ] \dd p\\ 
    &  = \int (1+f) \ln\left( \frac{1+f}{1+B_{\mu,\nu}}\right) + \int f \ln\left( \frac{1+B_{\mu,\nu}}{f} \right) -\int (1+B_{\mu,\nu})\ln (1+B_{\mu,\nu})\\
    & \hspace{8cm} + \int [ B_{\mu,\nu} \ln (B_{\mu,\nu}) + \ln (1+B_{\mu,\nu}) ] 
    \\
    & = \int 
    \left\{ (1+f) \ln\left( \frac{1+f}{1+B_{\mu,\nu}}\right) - f \ln \left( \frac{f}{B_{\mu,\nu}} \right) - f \ln e^{-\mu \omega(p) -\nu} + B_{\mu,\nu} \ln \left(  \frac{B_{\mu,\nu}}{1+B_{\mu,\nu}}\right) \right\} . 
\end{align*}
Now since $1+B_{\mu,\nu} = B_{\mu,\nu} e^{\mu \omega(p) + \nu}$, the right-hand side becomes
\begin{align*}
     & \mathcal{H}_{qu}(\lambda) - \mathcal{H}_{qu}(B_{\mu,\nu}) \\ &= 
\int \left\{ (1+f) \ln\left( \frac{1+f}{1+B_{\mu,\nu}}\right) - f \ln \left( \frac{f}{B_{\mu,\nu}} \right) \right\} + \int (\mu\omega(p) + \nu) (f- B_{\mu, \nu}).
\end{align*}

Now we examine the function 
$\varphi_y(x) = (1+x) \ln\left( \frac{1+x}{1+y}\right)  - x \ln\left( \frac{x}{y}\right) \leq 0$ with strict inequality for $x\neq y$. We compute $\varphi_y' = \ln\left(  \frac{1+x}{1+y}\frac{y}{x}\right)$, which is zero if and only if $x=y$. This implies that 
$$\mathcal{H}_{qu}(\lambda)\leq \mathcal{H}_{qu}(B_{\mu,\nu}) +  \int (\mu\omega(p) + \nu) (f\dd p + \dd \lambda_{sing}) - \mu \mathcal{E} ( B_{\mu, \nu}) - \nu \mathcal{M} ( B_{\mu, \nu}). $$
Or in other words, by the mass and energy constraints on $\lambda$, 
\begin{equation} \label{Quantum entropy ineq} \mathcal{H}_{qu}(\lambda)\leq \mathcal{H}_{qu}(B_{\mu,\nu}) +\mu (\mathcal{E}_0 - \mathcal{E} ( B_{\mu, \nu})) + \nu (\mathcal{M}_0 - \mathcal{M} ( B_{\mu, \nu})).
\end{equation}

Now analogously with the classical case we have the following: 

\medskip
\noindent
(i) If $\mathcal{M}_0, \mathcal{E}_0$ are so that $f_- (\mathcal{M}_0) < \mathcal{E}_0 < f_+ (\mathcal{M}_0) $ (for large masses $\mathcal{M}_0$, we recover the classical case $\mathcal{E}_0/\mathcal{M}_0 \in (\alpha, \beta)$), thanks to Theorem \ref{theorem1_Quantum}, we can choose the parameters $\mu, \nu $ so that $\mathcal{M}(B_{\mu,\nu}) = \mathcal{M}_0$ and $\mathcal{E}(B_{\mu,\nu}) = \mathcal{E}_0$. Then inserting this in the inequality \eqref{Quantum entropy ineq}, we immediately get that 
$\mathcal{H}_{qu}(\lambda)\leq \mathcal{H}_{qu}(B_{\mu,\nu})$ with equality if and only if $\lambda =B_{\mu,\nu}$.

\medskip
\noindent
(ii) If $\mathcal{M}_0, \mathcal{E}_0$ are so that $f_+ (\mathcal{M}_0) \leq \mathcal{E}_0$, thanks to Theorem \ref{theorem1_Quantum} we can choose $\mu,\nu$ so that $\mu=-\nu $, $\nu>0$ and 
$$\mathcal{E}(B_{\mu, \nu}) = f_+ (\mathcal{M}(B_{\mu, \nu})) \ \text{ and }\  \mathcal{E}_0 -  \mathcal{E}(B_{\mu,\nu}) = \mathcal{M}_0 - \mathcal{M}(B_{\mu,\nu})$$
(the existence of a solution $(\mu,\nu)$ to the above can be read off from Figure \ref{figureEM}).

Then inequality \eqref{Quantum entropy ineq} becomes $\mathcal{H}_{qu}(\lambda)\leq \mathcal{H}_{qu}(B_{\mu,\nu})$ with equality when $f= B_{\mu,\nu}$. Since $\mathcal{M}(\lambda_{sing}) = \mathcal{E}(\lambda_{sing})$, we should have $\int \dd \lambda_{sing} = \int \omega \dd \lambda_{sing}$, meaning that the measure $\lambda_{sing}$ is concentrated where $\omega=1$. 

\medskip
\noindent
(iii) Same argument holds in the remaining regime. 
\end{proof}

\bibliographystyle{alpha}
\bibliography{references.bib}

\end{document}